\newcommand{\burl}[1]{\textcolor{blue}{\url{#1}}}
\newcommand{\emaillink}[1]{\textcolor{blue}{\href{mailto:#1}{#1}}}
\newtheorem{thm}{Theorem}[section]
\newcommand\bd{\begin{displaymath}}
\newcommand\ed{\end{displaymath}}
\newcommand\be{\begin{equation}}
\newcommand\ee{\end{equation}}
\newcommand\bea{\begin{eqnarray}}
\newcommand\eea{\end{eqnarray}}
\newcommand\bi{\begin{itemize}}
\newcommand\ei{\end{itemize}}
\newcommand\ben{\begin{enumerate}}
\newcommand\een{\end{enumerate}}
\newcommand\bc{\begin{center}}
\newcommand\ec{\end{center}}
\newcommand\ba{\begin{array}}
\newcommand\ea{\end{array}}
\newcommand{\R}{\ensuremath{\Bbb{R}}}
\newcommand{\twocase}[5]{#1 \begin{cases} #2 & \text{#3}\\ #4
&\text{#5} \end{cases}   }
\newcommand{\E}{\ensuremath{\mathbb{E}}}
\numberwithin{equation}{section}
\begin{document}




\title[The Weibull Distribution and Benford's Law]{The Weibull Distribution and Benford's Law}

\author{Victoria Cuff}\email{\emaillink{vcuff@g.clemson.edu}}
\address{Department of Mathematics, Clemson University, Clemson, SC}

\author{Allison Lewis}\email{\emaillink{allewis2@ncsu.edu}}
\address{Department of Mathematics, North Carolina State University, Raleigh, NC 27695}

\author{Steven J. Miller}\email{\emaillink{sjm1@williams.edu}, \emaillink{Steven.Miller.MC.96@aya.yale.edu}}
\address{Department of Mathematics and Statistics, Williams College, Williamstown, MA 01267}

\bigskip
\begin{abstract}
Benford's law states that many data sets have a bias towards lower leading digits (about 30\% are 1s). There are numerous applications, from designing efficient computers to detecting tax, voter and image fraud. It's important to know which common probability distributions are almost Benford. We show the Weibull distribution, for many values of its parameters, is close to Benford's law, quantifying the deviations. As the Weibull distribution arises in many problems, especially survival analysis, our results provide additional arguments for the prevalence of Benford behavior. The proof is by Poisson summation, a powerful technique to attack such problems.
\end{abstract}

\subjclass[2010]{60F05,	11K06  (primary), 60E10, 42A16, 62E15, 62P99.}

\keywords{Benford's law, Weibull distribution, digit bias, Poisson Summation}

\date{\today}

\thanks{The first and second named authors were supported by NSF Grant DMS0850577 and Williams College; the third named author was supported by NSF Granta DMS0970067 and DMS1265673. This work was done at the 2010 SMALL REU at Williams College, and a summary  of it will appear in Chapter Three of \emph{The Theory and Applications of Benford's Law}, to be published by Princeton University Press and edited by the third named author. Since this work was written many of these results have been independently derived and applied them to Internet traffic; see the work of Arshadi and Jahangir \cite{AJ}. }

\maketitle


\section{Introduction to and Applications of Benford's Law}

For any positive number $x$ and base $B$, we can represent $x$ in scientific notation as $x=S_B(x)\cdot B^{k(x)}$, where $S_B(x) \in [1,B)$ is called the significand\footnote{The significand is sometimes called the mantissa; however, such usage is discouraged by the IEEE and others as mantissa is used for the fractional part of the logarithm, a quantity which is also important in studying Benford's law.} of $x$ and the integer $k(x)$ represents the exponent. Benford's Law of Leading Digits proposes a distribution for the significands which holds for many data sets, and states that the proportion of values beginning with digit $d$ is approximately
\bea
\mathrm{Prob}({\rm first\ digit\ is\ }d\ {\rm base\ }B) \  =\ \log_{B}\left(\frac{d+1}{d}\right);
\eea more generally, the proportion with significand at most $s$ base $B$ is \bea {\rm Prob}(1 \le S_B \le s) \ = \ \log_{B} s. \eea In particular, base 10 the probability that the first digit is a 1 is about 30.1\% (and not the 11\% one would expect if each digit from 1 to 9 were equally likely).

This leading digit irregularity was first discovered by Newcomb \cite{Ne} in 1881 , who noticed that the earlier pages in the logarithmic books were more worn than other pages. Fifty years later Benford \cite{Ben} observed the same digit bias in a variety of data sets. Benford studied the distribution of the first digits of 20 sets of data with over 20,000 total observations, including river lengths, populations, and mathematical sequences. For a full history and description of the law, see \cite{Hi1,Rai}, or go to the Online Benford Bibliography \cite{BH4} for additional reading.

One of the most fascinating aspects of Benford's law is the large and diverse list of fields studying it (auditing, computer science, dynamical systems, engineering, number theory, and statistics, to list a few). There are numerous applications, especially in fraud and data integrity. Two of the more famous are detecting tax and voter fraud (Cho and Gaines \cite{CG07}, Mebane \cite{Me}, Nigrini \cite{Nig1,Nig2}), but there are also applications in many other fields, ranging from round-off errors in computer science (Knuth \cite{Knu}) to detecting image fraud and compression in engineering \cite{AHMP-GQ}. Already Benford's law has led to a variety of tests, either to detect fraud (in everything from corporate returns to medical studies) or to test data integrity; see for example \cite{JS,Nig2,NiMi}.

In the next section we discuss attempts to explain the prevalence of Benford's law; unfortunately, some of these approaches are flawed, and have been incorrectly used for decades. Our purpose in this article is to highlight techniques from Fourier analysis that may not be widely known to the diverse group of researchers and aficionados in the field, emphasizing how Poisson summation provides a clean and correct way to quantify deviations from Benford's law for a variety of phenomena. Our main result is to quantify how close Weibull distributions are to Benford (we state these in Theorem \ref{thm:main} in \S\ref{sec:mainresults}, after first reviewing the needed pre-requisites in \S\ref{sec:prereqs}; the proof is given in \S\ref{sec:approachviaderivs}). For certain values of the scale and shape parameter these distributions are almost Benford; this is quite important, as many survival distributions are modeled by Weibull distributions, and thus Benford tests are applicable.

\section{Explanations of Benford's Law}

There have been numerous attempts to pass from observing the prevalence of Benford's law to explaining its occurrence in different and diverse systems. Such knowledge gives us a deeper understanding of which natural data sets should follow Benford's law. One of the earliest and most popular is due to Feller  \cite{Fe}, and has been the subject of many articles and papers since (a very good, recent description of this approach is given in Fewster \cite{Few}). It suggests that Benford behavior arises when a probability distribution is spread out over several orders of magnitude. Unfortunately, while some distributions satisfying this condition are close to Benford, others are not, and the method is sadly fundamentally flawed. See \cite{BH1,BH2,Hi3} for detailed critiques of this method. The first rigorous explanation of Benford's law due to Hill \cite{Hi2} through scale invariance and measure theory (essentially, the distribution of leading digits should be invariant if we change scale); see also \cite{BH3}.

Rather than trying to prove why so many different phenomena are almost Benford, another approach is to study specific, important instances. In particular, there is an extensive literature on the leading digits of random variables and products of random variables of specific distributions (see for example \cite{MiNi1}). While these arguments cannot be as general, the systems described arise in many important applications, making the importance of these researches clear.

The starting point of this work is the paper by Leemis, Schmeiser, and Evans \cite{LSE}, who champion this viewpoint. They ran numerical simulations on a variety of parametric survival distributions to examine conformity to Benford's Law. Among these distributions was the Weibull distribution, whose density is  \be \label{eq:stndweibullintro} \twocase{f(x; \alpha, \gamma)\   =  \   }{\frac{\gamma}{\alpha} \left(\frac{x}{\alpha}\right)^{(\gamma - 1)} \exp\left( - \left( \frac{x}{\alpha} \right) ^\gamma \right)}{if $x  \ge 0$}{0}{otherwise,}\ee where $\alpha, \gamma >  0$. Note that $\alpha$ adjusts the scale of the data and only $\gamma$ affects the shape of the distribution.\footnote{One could introduce another parameter, $\beta$, which would represent a translation of the data. Doing so replaces $x$ with $x-\beta$, and the condition $x \ge 0$ becomes $x \ge \beta$. In this paper we concentrate on the case $\beta = 0$.} Special cases of the Weibull include the exponential distribution ($\gamma=1$) and the Rayleigh distribution ($\gamma=2$). The most common use of the Weibull is in survival analysis, where a random variable $X$ modeled by the Weibull represents the ``time-to-failure'', resulting in a distribution where the failure rate is modeled relative to a power of time.

The Weibull distribution arises in problems in such diverse fields as food contents, engineering, medical data, politics,  pollution and sabermetrics, along with many others; see \cite{An,Ca,CB,Fr,MABF,Mik,Mi,TKD,We,Yi,ZLYM} to name just a few. As the extensiveness of this list indicates, many data sets follow a Weibull distribution, and thus if we are going test for fraud or data integrity, it is essential to quantify how close these distributions are to Benford. Our goal in this work is to provide proofs of the observations of Leemis, Schmeiser, and Evans \cite{LSE} that Weibulls are often close to Benford, emphasizing the ideas behind the method as these are applicable to a variety of other problems (see for example \cite{JKKKM,KM,MiNi2}).

\section{Mathematical Preliminaries}\label{sec:prereqs}

Our analysis generalizes the work of \cite{MiNi2}, where the exponential case was studied in detail (see also \cite{DL} for another approach to analyzing exponential random variables). The main ingredients come from Fourier analysis, in particular applying Poisson summation to the derivative of the cumulative distribution function of the logarithms modulo 1, $F_B$. We first review some needed definitions, then describe why it is so useful to study the logarithms modulo 1, and conclude with a quick review of Poisson summation.\\

\ben


\item The Gamma function $\Gamma(s)$ generalizes the factorial function; we have $\Gamma(n+1) = n!$ for $n$ a non-negative integer, and for $\Re(s) > 0$ we have $$\Gamma(s) \ = \ \int_0^\infty e^{-x} x^{s-1}dx$$ (we will need to evaluate the Gamma function at complex arguments in our analysis); here $\Re(z)$ denotes the real part of $z$. See \cite{WW} for an introduction and proofs of needed properties.

\item We say $a$ is congruent to $b$ modulo 1 if $a-b$ is an integer; we denote this by $a=b \bmod 1$.

\item A sequence $\{a_n\}_{n=1}^\infty \subset [0,1]$ is equidistributed if \be \lim_{N\to \infty} \frac{ \# \{n : n \leq N,\ a_n\in [a,b]\} }{N} \ = \ b-a\nonumber\ee for all $[a,b]\subset [0,1]$. Similarly a continuous random variable on $[0,\infty)$ whose probability density function is $p$ is equidistributed modulo $1$ if \be \lim_{T\to\infty} \frac{\int_0^T \chi_{a,b}(x)p(x)dx}{\int_0^T p(x)dx} \ = \ b-a\nonumber \ee for any $[a,b] \subset [0,1]$, where $\chi_{a,b}(x) = 1$ for $x\bmod 1 \in [a,b]$ and $0$ otherwise.

\item If $f$ is an integrable function (so $\int_{-\infty}^\infty |f(x)|dx < \infty$) then its Fourier transform, denoted $\widehat{f}$, is given by $$\widehat{f}(y) \ = \ \int_{-\infty}^\infty f(x) e^{-2\pi i xy} dx, \ \ \ \ {\rm where}\ e^{i u}\ =\ \cos u + i \sin u.$$ Note if $X$ is a random variable with density $f$ then this is a rescaled version of its characteristic function, $\E[e^{itX}]$.

\item Let $\eta > 0$. We say $f$ decays like $x^{-(1+\eta)}$ if there are constants $x_0, C_\eta > 0$ such that $|f(x)| \le C_\eta |x|^{-(1+\eta)}$ for all $|x| > x_0$.

\een \ \\

One of the most common ways to prove a system is Benford is to show that its logarithms modulo 1 are equidistributed. We quickly sketch the proof of this equivalence; see \cite{Dia,MiNi2,MT-B} for details. If $y_n = \log_B x_n \bmod 1$ (thus $y_n$ is the fractional part of the logarithm of $x_n$), then the significands of $B^{y_n}$ and $x_n = B^{\log_B x_n}$ are equal, as these two numbers differ by a factor of $B^k$ for some integer $k$. If now $\{y_n\}$ is equidistributed modulo 1, then by definition for any $[a,b] \subset [0,1]$ we have $\lim_{N\to\infty} \#\{n \le N: y_n \in [a,b]\}/N = b-a$. Taking $[a,b] = [0,\log_B s]$ implies that as $N\to\infty$ the probability that $y_n \in [0,\log_B s]$ tends to $\log_B s$, which by exponentiating implies that the probability that the significand of $x_n$ is in $[1,s]$ tends to $\log_B s$, the Benford probability.

Given a random variable $X$, let $F_B$ denote the cumulative distribution function of $\log_B X$ $\bmod$ $1$. The above discussion shows that Benford's law is equivalent to $F_B(z)=z$, or our original random variable $X$ is Benford if $F'_B(z)=1$. This suggests that a natural way to investigate deviations from Benford behavior is to compare the deviation of $F'_B(z)$ from 1, which would represent a uniform distribution.

Fourier analysis is ideally suited for these computations. The reason is that in general one cannot throw away part of a mathematical expression and maintain equality. For example, $\sqrt{(x \bmod 1) + (y \bmod 1)}$ is neither equal to nor congruent modulo 1 to $\sqrt{x+y}$; however, $e^{2\pi i x}$ does equal $e^{2\pi i (x \bmod 1)}$. By using the complex exponentials, it is harmless to drop modulo 1 restrictions. As these restrictions naturally arise in investigating the first digit, it is natural to attack the problem with Fourier techniques.

The last ingredient we need is Poisson summation. We don't state it in its most general form, as the following weak version typically suffices for Benford investigations due to the smoothness of the underlying densities. See \cite{MT-B} or \cite{SS} for a proof.

\begin{thm}[Poisson summation]\label{thm:poissonsum} Let $f, f'$ and $f''$ be continuous functions which decay like $x^{-(1+\eta)}$ for some $\eta > 0$. Then $$\sum_{n=-\infty}^\infty f(n) \ = \ \sum_{n=-\infty}^\infty \widehat{f}(n).$$ \end{thm}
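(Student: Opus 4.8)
The plan is to use the standard periodization argument. First I would define the periodization $F(x) = \sum_{n=-\infty}^\infty f(x+n)$, which is formally a period-$1$ function whose value at $x=0$ is precisely the left-hand side. Because $f$ decays like $x^{-(1+\eta)}$, the defining series converges absolutely and uniformly on compact sets (comparing the tail to $\sum |n|^{-(1+\eta)}$), so $F$ is well-defined and continuous. Applying the same decay hypothesis to $f'$ and $f''$ shows that the termwise-differentiated series also converge uniformly, so $F$ is in fact twice continuously differentiable, as well as periodic.

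Next I would compute the Fourier coefficients of $F$. Writing $c_k = \int_0^1 F(x) e^{-2\pi i k x}\,dx$ and interchanging the uniformly convergent sum with the integral, each term becomes $\int_0^1 f(x+n) e^{-2\pi i k x}\,dx$. The substitution $u = x+n$, together with $e^{2\pi i k n} = 1$, turns the $n$-th term into $\int_n^{n+1} f(u) e^{-2\pi i k u}\,du$, and summing over $n$ reassembles the whole line: $c_k = \int_{-\infty}^\infty f(u) e^{-2\pi i k u}\,du = \widehat{f}(k)$. Thus the Fourier coefficients of the periodization are exactly the samples of $\widehat{f}$ at the integers.

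The final step is to evaluate the Fourier series of $F$ at $x=0$. Since $F \in C^2$, integrating by parts twice (the boundary terms vanishing by the decay of $f$ and $f'$) gives $\widehat{f}(k) = \widehat{f''}(k)/(2\pi i k)^2$ for $k \neq 0$, so $|c_k| = |\widehat{f}(k)| \ll k^{-2}$ and hence $\sum_k |\widehat{f}(k)| < \infty$. Absolute convergence of the coefficients forces the Fourier series $\sum_k c_k e^{2\pi i k x}$ to converge uniformly to the continuous function $F$; in particular it converges to $F$ at every point. Setting $x=0$ then yields $F(0) = \sum_k c_k$, that is, $\sum_n f(n) = \sum_k \widehat{f}(k)$, which is the claim.

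The main obstacle — and the reason the hypotheses on $f'$ and $f''$ appear at all — is this final convergence step: one must know not merely that $F$ possesses a Fourier series, but that the series actually converges back to $F(0)$. Mere continuity of $F$ does not guarantee pointwise convergence of its Fourier series, so the extra regularity is used to force quadratic decay of the coefficients and thereby absolute, uniform convergence. By contrast, the interchanges of sum and integral are routine given the uniform convergence established at the outset, so the crux of the argument is this passage from smoothness to convergence.
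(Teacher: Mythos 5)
Your proof is correct. Note that the paper does not prove this theorem at all---it explicitly defers to \cite{MT-B} and \cite{SS}---but your argument (periodize $f$, identify the Fourier coefficients of the periodization with the samples $\widehat{f}(k)$, then use the hypotheses on $f'$ and $f''$ to get quadratic decay of the coefficients and hence uniform convergence of the Fourier series back to the periodization at $x=0$) is precisely the standard proof found in those references, with the key point correctly identified: the smoothness assumptions are there to guarantee the Fourier series actually converges to $F(0)$, since continuity alone would not suffice.
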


Our assumptions about $f$ imply that $\widehat{f}$ decays rapidly. The power of Poisson summation is that it typically allows us to exchange a slowly converging sum with a rapidly converging sum. In many applications only the $n=0$ term matters; if $f$ is a probability density then it integrates to 1 and hence $\widehat{f}(0) = 1$. For us, this is important as it implies a sum over non-zero $n$ can measure a deviation.

For example, consider the density of a normal random variable $Y$ with mean 0 and variance $N/2\pi$; this example is very important in showing Brownian motions and many product of independent random variables become Benford (see \cite{MT-B,MiNi1}). If we want to see how often $Y \bmod 1$ is in an interval $[a,b] \subset [0,1]$, we need to study ${\rm Prob}(Y\bmod 1 \in [a,b]) = \sum_{n=-\infty}^\infty {\rm Prob}(Y \in [a+n, b+n])$. We \emph{sketch} how Poisson summation enters, and provide full details when we prove our main result. The latter probabilities are integrals of the density over the intervals $[a+n, b+n]$, and if $N$ is large each of these is approximately $b-a$ times the density at $n$. By Poisson summation, summing the density over $n$ is the same as summing the Fourier transform at $n$: $$\sum_{n=-\infty}^\infty\frac{1}{\sqrt{N}}\ e^{-\pi
n^2/N} \ = \ \sum_{n=-\infty}^\infty e^{-\pi n^2 N}.$$ Note the sharp contrast between the two sums. For the first sum, all $n$ with $|n| \le \sqrt{N}$ contribute the same order of magnitude, while for the second sum the $n=0$ term contributes 1 and the next term is immensely smaller (by a factor of $e^{-\pi N}$). This example illustrates how Poisson summation allows us to replace a slowly decaying sum of a density with a rapidly decaying one.



\section{Main Results}\label{sec:mainresults}

Our main result is the following extension of results for the exponential distribution, which measures the deviation of the logarithm modulo 1 of Weibulls and the uniform distribution. It's thus not surprising that for $\gamma$ close to 1 the digits are close to Benford, as $\gamma = 1$ corresponds to the exponential distribution. The main contribution below is quantifying how the fit worsens as $\gamma$ grows. The larger $\gamma$ is, the worse the fit. This is intuitively plausible from a plot of the Weibull density; as $\gamma$ increases, the distribution becomes more concentrated near 1. Part of the $\alpha$ is easier to explain. As the effect of replacing $\alpha$ by $\alpha B$ is simply to rescale our random variable by a factor of $B$, the significand is unaffected. Thus it suffices to study $\alpha$ in the window $[1, B)$, but $\gamma$ may be any real value.


\begin{thm} \label{thm:main} Let $Z_{\alpha,\gamma}$ be a random variable whose density is a Weibull with parameters $\alpha, \gamma > 0$ arbitrary. For $z \in
[0,1]$, let $F_B(z)$ be the cumulative distribution function of $\log_BZ_{\alpha,\gamma} \bmod 1$; thus $F_B(z) := {\rm Prob}(\log_BZ_{\alpha,\gamma} \bmod
1 \in [0,z])$. Then the density of $\log_B Z_{\alpha,\gamma} \bmod 1$, $F_B'(z)$, is given by \bea \label{eq:thmeqmainFprime} F_B'(z) &\ = \ & 1 + 2\sum_{m =1}^{ \infty } \Re\left(e^{-2\pi im \left(z -\frac{\log \alpha}{\log B} \right)} \cdot \Gamma \left(1+ \frac{2\pi i m}{\gamma \log B} \right) \right). \eea In particular, the densities of $\log_B Z_{\alpha,\gamma} \bmod 1$ and $\log_B Z_{\alpha B,\gamma}\bmod 1$ are equal, and thus it suffices to consider only $\alpha$ in an interval of the form $[a, aB)$ for any $a>0$. \end{thm}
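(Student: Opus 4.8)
The plan is to express $F_B'(z)$ as the periodization of the density of $\log_B Z_{\alpha,\gamma}$ and then apply Poisson summation (Theorem~\ref{thm:poissonsum}); this trades a slowly varying sum of a density for a rapidly converging sum whose $n=0$ term is exactly the Benford value $1$, with the remaining terms measuring the deviation.

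First I would pass to the logarithm. Writing $Y = \log_B Z_{\alpha,\gamma}$ and using the Weibull cumulative distribution function $1-\exp(-(x/\alpha)^\gamma)$ together with the chain rule, a change of variables gives the density
\[ f_Y(y) \ = \ \gamma (\log B)\, (B^y/\alpha)^\gamma \exp\!\big(-(B^y/\alpha)^\gamma\big). \]
Since the density of $Y \bmod 1$ at $z\in[0,1]$ is obtained by folding $f_Y$ over the integer translates, we have $F_B'(z) = \sum_{n=-\infty}^\infty f_Y(z+n)$. Setting $\phi(t) := f_Y(z+t)$, Poisson summation yields $F_B'(z) = \sum_{m=-\infty}^\infty \widehat{\phi}(m)$, and since translation by $z$ only multiplies the transform by a unimodular factor, $\widehat{\phi}(m) = e^{2\pi i m z}\,\widehat{f_Y}(m)$.

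The crux is the evaluation of $\widehat{f_Y}(m)$. Here I would either substitute $w = (B^y/\alpha)^\gamma$ directly in $\int_{-\infty}^\infty f_Y(y) e^{-2\pi i m y}\,dy$, or, more cleanly, recognize that $\widehat{f_Y}(m) = \E\big[e^{-2\pi i m \log_B Z_{\alpha,\gamma}}\big] = \E\big[Z_{\alpha,\gamma}^{-2\pi i m/\log B}\big]$ is a complex moment of the Weibull. Either route produces a Gamma integral: with $s = 1 - 2\pi i m/(\gamma\log B)$, the $y$-dependent part collapses to $\int_0^\infty w^{\,s-1}e^{-w}\,dw = \Gamma(s)$ (convergent since $\Re s = 1 > 0$), while the scale parameter contributes the prefactor $e^{-2\pi i m \log\alpha/\log B}$. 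Thus $\widehat{f_Y}(m) = e^{-2\pi i m \log\alpha/\log B}\,\Gamma\!\big(1 - \tfrac{2\pi i m}{\gamma\log B}\big)$. The $m=0$ term is $\Gamma(1)=1$; pairing each $m\ge 1$ with $-m$ and using that $f_Y$ is real (so $\widehat{\phi}(-m) = \overline{\widehat{\phi}(m)}$) collapses each pair to twice a real part, and a short check shows the $-m$ term is precisely $e^{-2\pi i m(z - \log\alpha/\log B)}\Gamma(1 + \tfrac{2\pi i m}{\gamma\log B})$, which is exactly the summand in \eqref{eq:thmeqmainFprime}.

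The main obstacle is justifying Poisson summation, i.e. verifying that $\phi$, $\phi'$, $\phi''$ are continuous and decay like $t^{-(1+\eta)}$. This is where I would spend the care: as $t\to+\infty$ the factor $\exp(-(B^{z+t}/\alpha)^\gamma)$ forces doubly-exponential decay, while as $t\to-\infty$ the prefactor $(B^{z+t}/\alpha)^\gamma$ decays like $e^{\gamma t \log B}$, so $\phi$ and its derivatives decay far faster than any power and the hypotheses of Theorem~\ref{thm:poissonsum} hold comfortably; this same decay legitimizes the folding interchange. Finally, the periodicity statement is immediate from \eqref{eq:thmeqmainFprime}: replacing $\alpha$ by $\alpha B$ sends $\log\alpha/\log B$ to $\log\alpha/\log B + 1$, and the resulting extra factor $e^{2\pi i m}=1$ leaves every summand unchanged, so $F_B'$ depends on $\alpha$ only through $\log_B\alpha \bmod 1$. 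Hence it is periodic of multiplicative period $B$ in $\alpha$, and it suffices to take $\alpha \in [a,aB)$.
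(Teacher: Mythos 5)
Your proposal is correct and takes essentially the same route as the paper: periodize the density of $\log_B Z_{\alpha,\gamma}$ (your $\phi(t)=f_Y(z+t)$ is exactly the paper's function $H(t)$), apply Poisson summation, evaluate the resulting Fourier transform as a Gamma integral via the substitution $w = (B^y/\alpha)^\gamma$, and pair the $m$ and $-m$ terms using conjugate symmetry of $\Gamma(1\pm iy)$ to get twice a real part, with the $\alpha \mapsto \alpha B$ periodicity read off from the exponential factor. The only cosmetic difference is that the paper arrives at the periodized density by writing $F_B(z)$ as a folded sum of CDF differences and differentiating term by term, whereas you fold the density of $\log_B Z_{\alpha,\gamma}$ directly; the computations thereafter are identical.
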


From the fundamental equivalence, a straightforward integration immediately translates \eqref{eq:thmeqmainFprime} into quantifying differences in the distribution of leading digits of Weibulls and Benford's law. Specifically, the probability of a first digit of $d$ is obtained by integrating $F_B'(z)$ from $\log_B d$ to $\log_B (d+1)$. The main term comes from the constant 1, and is $\log_B \frac{d+1}{d}$, the Benford probability; we discuss the size of the error in Theorem \ref{thm:error}.

The above theorem is proved in the next section. As in \cite{MiNi2}, the proof involves applying Poisson summation to the derivative of the cumulative distribution function of the logarithms modulo 1, which as discussed in the previous section is a natural way to compare deviations from the resulting distribution and the uniform distribution. The key idea is that if a data set satisfies Benford's Law, then the distribution of its logarithms will be uniform. Our series expansions are obtained by applying properties of the Gamma function.

As the deviations of $F_B'(z)$ from being identically 1 measure the deviations from Benford behavior, it is important to have good estimates for the sum over $m$ in \eqref{eq:thmeqmainFprime}. The bounds below have not been optimized, but instead have been chosen to simplify the algebra in the proofs (given in Appendix \ref{sec:boundproofs}). Thus we assume $k$ below is at least 6, which is essentially equivalent to only investigating the case where the error $\epsilon$ is required to be of at most modest size (which is reasonable, as a series expansion with a large error is useless).


\begin{thm}\label{thm:error} Let $F_B'(z)$ be as in \eqref{eq:thmeqmainFprime}.
\begin{enumerate}
\item\label{item:maintwo} For $M \ge \frac{ \gamma \log B \log 2 }{ 4 \pi ^2 }$, the error from dropping the $m \ge M$ terms in $F_B'(z)$ is at most
$$\frac{2\sqrt{2} (\pi^2+\gamma\log B) \sqrt{\gamma \log B}}{\pi^3}\ M\ e^{- \pi ^2 M/ \gamma \log B}.$$

\item\label{item:mainthree} In order to have an error of at most $\epsilon$ in evaluating $F_B'(z)$, it suffices to take the first $M$ terms, where $M = (k + \ln k + 1/2)/a$,
with $k = \max\left(6,- \ln \left(a \epsilon/C \right)\right)$, $a = \pi^2/ (\gamma \log B)$, and  $C =\frac{2\sqrt{2}(\pi^2+\gamma\log B)\sqrt{\gamma \log B}}{\pi^3}$.
\end{enumerate}
\end{thm}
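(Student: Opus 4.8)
The plan is to bound the truncation error by controlling the tail of the series in \eqref{eq:thmeqmainFprime} directly. Since dropping the terms with $m \ge M$ changes $F_B'(z)$ by $2\left|\sum_{m \ge M}\Re\left(e^{-2\pi i m(z-\log\alpha/\log B)}\,\Gamma(1+2\pi i m/(\gamma\log B))\right)\right|$, and since $|e^{-2\pi i m(z-\log\alpha/\log B)}| = 1$ and $|\Re(w)| \le |w|$, the triangle inequality reduces everything, uniformly in $z$ and $\alpha$, to estimating $\sum_{m \ge M}\left|\Gamma\left(1+2\pi i m/(\gamma\log B)\right)\right|$. The whole problem thus becomes a question about how fast the modulus of the Gamma function decays along the vertical line $\Re(s)=1$.

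The key input is the exact formula for that modulus. Writing $t_m = 2\pi m/(\gamma\log B)$, I would combine $\overline{\Gamma(1+it)} = \Gamma(1-it)$ with the functional equation $\Gamma(1+it)=it\,\Gamma(it)$ and the reflection formula $\Gamma(s)\Gamma(1-s)=\pi/\sin(\pi s)$ to obtain $|\Gamma(1+it)|^2 = \pi t/\sinh(\pi t)$. Setting $a = \pi^2/(\gamma\log B)$ gives $\pi t_m = 2am$, so $|\Gamma(1+it_m)|^2 = 2am/\sinh(2am)$, which already displays the decay rate $e^{-am}$ matching the exponent $e^{-\pi^2 M/(\gamma\log B)}$ in the claim. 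To convert $\sinh$ into a single exponential I would write $\sinh(2am) = \tfrac12 e^{2am}(1-e^{-4am})$ and invoke the hypothesis $M \ge \gamma\log B\,\log 2/(4\pi^2)$, which is exactly the statement $e^{-4aM}\le \tfrac12$; hence $1-e^{-4am}\ge \tfrac12$ for every $m\ge M$, yielding $|\Gamma(1+it_m)| \le 2\sqrt2\,\sqrt{am}\,e^{-am}$. It then remains to sum a geometric-type tail: bounding $\sqrt m \le m$, using the closed form for $\sum_{m\ge M} m x^m$ with $x=e^{-a}$, and applying the elementary inequality $1/(1-e^{-a}) \le (1+a)/a$ (equivalent to $e^a \ge 1+a$), produces a bound of the form $(\mathrm{const})\cdot M e^{-aM}$. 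Reassembling the constants, which is where the factors $\pi^2+\gamma\log B$ and $\sqrt{\gamma\log B}$ reappear, gives part (\ref{item:maintwo}); these steps are routine, and the only point needing care is keeping every constant explicit so the final prefactor is exactly $C$.

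For part (\ref{item:mainthree}) the task is to invert part (\ref{item:maintwo}): I want an $M$ with $C M e^{-aM} \le \epsilon$, i.e.\ $u e^{-u} \le a\epsilon/C$ where $u := aM$, i.e.\ $u - \ln u \ge -\ln(a\epsilon/C) =: k$. This is a Lambert-$W$–type inequality with no elementary closed-form solution, so rather than solve it I would verify that the explicit choice $u = k + \ln k + \tfrac12$ suffices. Substituting reduces the required inequality to $\ln\!\left(1 + (\ln k + \tfrac12)/k\right) \le \tfrac12$; since $(\ln k + \tfrac12)/k$ is decreasing for $k > \sqrt e$ and is already below $e^{1/2}-1$ at $k=6$, the inequality holds for all $k \ge 6$. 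This is precisely why the hypothesis $k = \max(6,\,-\ln(a\epsilon/C))$ forces $k\ge 6$, and it simultaneously guarantees $aM = u \ge 6 > \tfrac14\log 2$, so the hypothesis of part (\ref{item:maintwo}) is automatically satisfied and the bound there applies.

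I expect this inversion to be the main obstacle. Once the modulus formula $|\Gamma(1+it)|^2 = \pi t/\sinh(\pi t)$ is in hand, the Gamma and $\sinh$ estimates are clean and the tail sum is a standard geometric computation; the delicate part is producing a \emph{correct} explicit $M$ from the transcendental inequality $u - \ln u \ge k$ and checking that the constant $6$ makes the monotonicity argument close, so that part (\ref{item:mainthree}) follows rigorously from part (\ref{item:maintwo}).
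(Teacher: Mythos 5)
Most of your outline tracks the paper's proof closely: the triangle-inequality reduction to $\sum_{m\ge M}\left|\Gamma\left(1+2\pi im/(\gamma\log B)\right)\right|$, the identity $|\Gamma(1+it)|^2=\pi t/\sinh(\pi t)$, and the conversion of the hypothesis on $M$ into $1-e^{-4am}\ge\tfrac12$, hence $|\Gamma(1+it_m)|\le 2\sqrt2\sqrt{am}\,e^{-am}$, are all exactly the paper's steps. Your part (\ref{item:mainthree}) is also essentially the paper's argument: it too sets $u=aM$, $k=-\ln(a\epsilon/C)$, tries $u=k+\ln k+\tfrac12$, and checks that $k\ge6$ closes the estimate; your monotonicity check of $(\ln k+\tfrac12)/k$ is a valid substitute for the paper's verification via $\ln k\le k/2$, and your observation that $aM\ge6$ makes the hypothesis of part (\ref{item:maintwo}) automatic is a point the paper leaves implicit.

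The genuine gap is in the one step you declare routine: summing the tail does \emph{not} reassemble into the constant $C$. With $x=e^{-a}$ your route gives
\[
\sum_{m\ge M}m e^{-am}\;=\;\Bigl(\frac{M}{1-x}+\frac{x}{(1-x)^2}\Bigr)x^{M}\;\le\;\Bigl(\frac{(1+a)M}{a}+\frac{(1+a)^{2}e^{-a}}{a^{2}}\Bigr)e^{-aM},
\]
whose main term carries $1/a$, whereas $C$ requires $1/a^{2}$: written in terms of $a$ one has $C=2\sqrt2\,(1+a)/a^{3/2}$, and multiplying your main term by your prefactor $2\cdot2\sqrt{2a}$ (you correctly keep the factor $2$ from $F_B'=1+2\sum_m\Re(\cdots)$) yields $\frac{4\sqrt2(1+a)}{\sqrt a}\,Me^{-aM}=2a\cdot C\,Me^{-aM}$, plus the leftover additive term. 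So your bound overshoots the claimed one by a factor $2a$, and whenever $a=\pi^{2}/(\gamma\log B)\ge\tfrac12$ (for $B=10$ this covers every $\gamma\le2\pi^{2}/\ln10\approx8.6$) your chain of inequalities does not imply the theorem. The paper reaches $1/a^{2}$ by a different device: it bounds the tail by $\int_{M}^{\infty}xe^{-ax}\,dx=a^{-2}(aM+1)e^{-aM}$ and integrates by parts. Moreover, you cannot fix this by tighter bookkeeping inside your framework: the single term $m=M$ already gives $2|\Gamma(1+it_M)|\ge4\sqrt{aM}\,e^{-aM}$, which exceeds $CMe^{-aM}$ whenever $M<2a^{4}/(1+a)^{2}$, so in that regime the quantity you reduced everything to (twice the sum of moduli) is itself larger than the claimed bound. (This in fact exposes looseness in the paper: its sum-to-integral comparison is not a valid inequality for large $a$, and its error term $\mathcal{E}_B$ omits the factor $2$; the stated constant $C$ depends on both conventions.) To make your write-up correct you would either have to adopt the paper's integral comparison as stated, or else prove part (\ref{item:maintwo}) with the larger constant that your argument actually yields.
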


For further analysis, we compared our series expansion for the derivative to the uniform distribution through a Kolmogorov-Smirnov test; see Figure \ref{fig:k-stest} for a contour plot of the discrepancy. This statistic measures the absolute value of the greatest difference in cumulative distribution functions of two densities. Thus the larger the value, the further apart they are. Note the good fit observed between the two distributions when $\gamma =1$ (representing the Exponential distribution), which has already been proven to be a close fit to the Benford distribution (\cite{DL,LSE,MiNi2}).

\begin{figure}[h]
\begin{center}
\includegraphics[scale=.75]{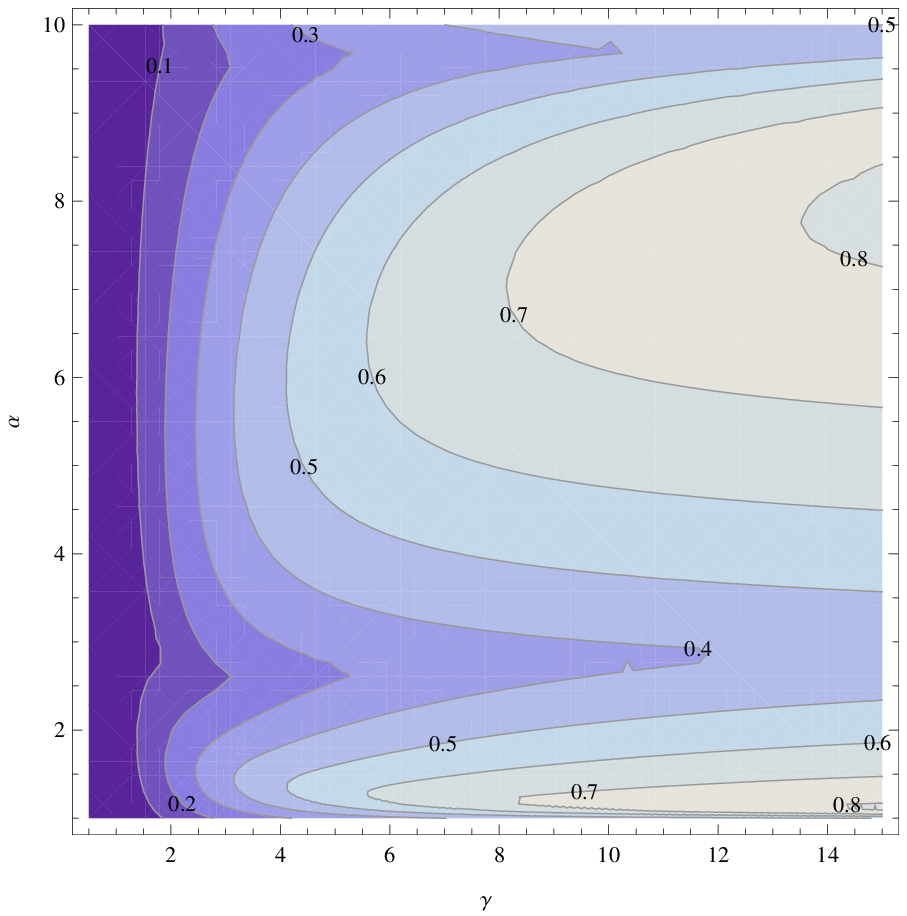}
\includegraphics[scale=.75]{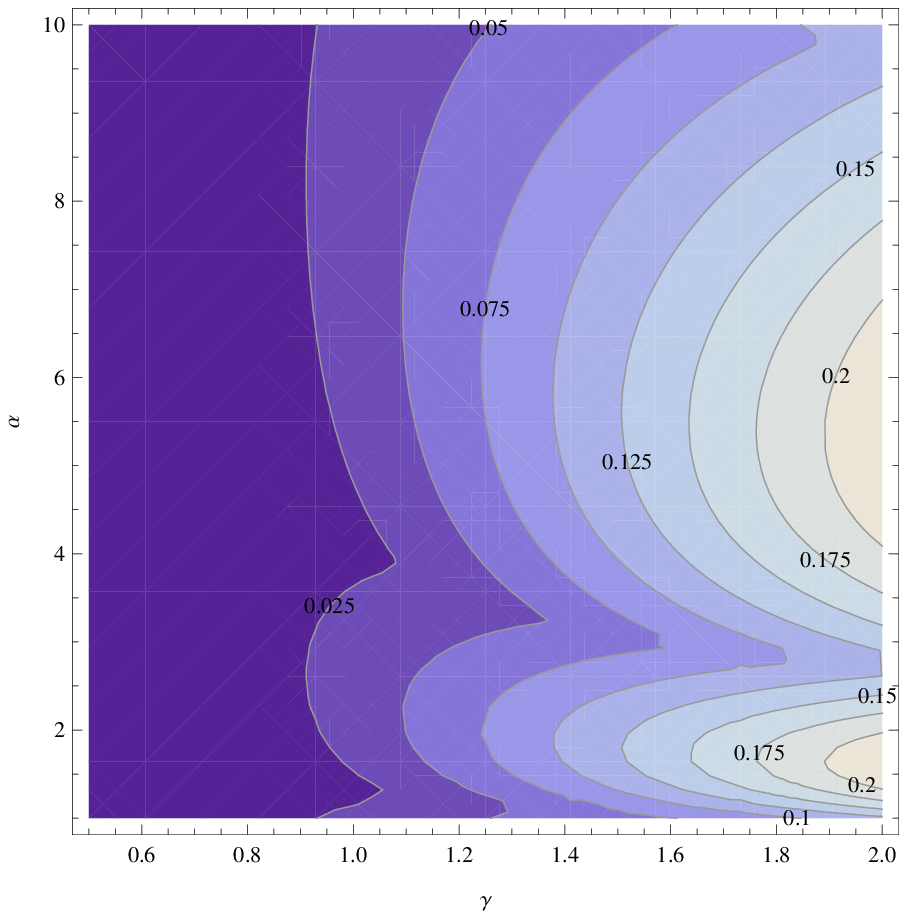}
\caption{Kolmogorov$-$Smirnov Test: Left: $\gamma \in [0, 15]$, Right: $\gamma \in [.5, 2]$. As $\gamma $ (the shape parameter on the x-axis) increases, the Weibull distribution is no longer a good fit compared to the uniform. Note that $\alpha$ (the scale parameter on the y-axis) has less of an effect on the overall conformance. }\label{fig:k-stest}
\end{center} \end{figure}

The Kolmogorov$-$Smirnov metric gives a good comparison because it allows us to compare the distributions in terms of both parameters, $\gamma$ and $\alpha$. We also look at two other measures of closeness, the $L_1$-norm and the $L_2$-norm, both of which also test the differences between  \eqref{eq:thmeqmainFprime} and the uniform distribution; see Figure \ref{fig:l1norm}. The $L_1$-norm of $f-g$ is $\int_0^1 |f(t)-g(t)|dt$, which puts equal weights on the all deviations, while the $L_2$-norm is given by $\int_0^1 |f(t)-g(t)|^2 dt$, which unlike the $L_1$-norm puts more weight on larger differences. The closer $\gamma$ is to zero the better the fit. As $\gamma$ increases the cumulative Weibull distribution is no longer a good fit compared to 1. The $L_1$ and $L_2$-norms are independent of $\alpha$.

\begin{figure}[h]
\begin{center}
\includegraphics[scale=.75]{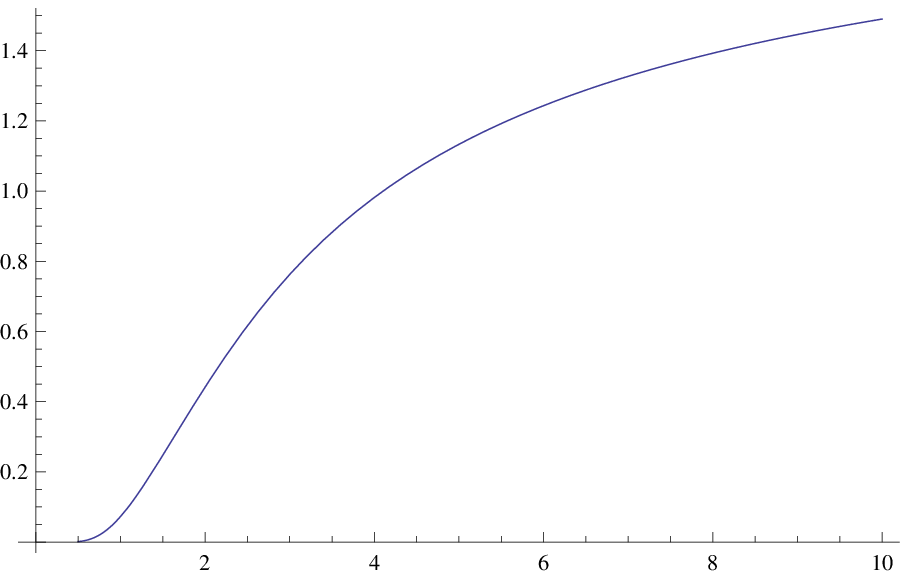}
\includegraphics[scale=.75]{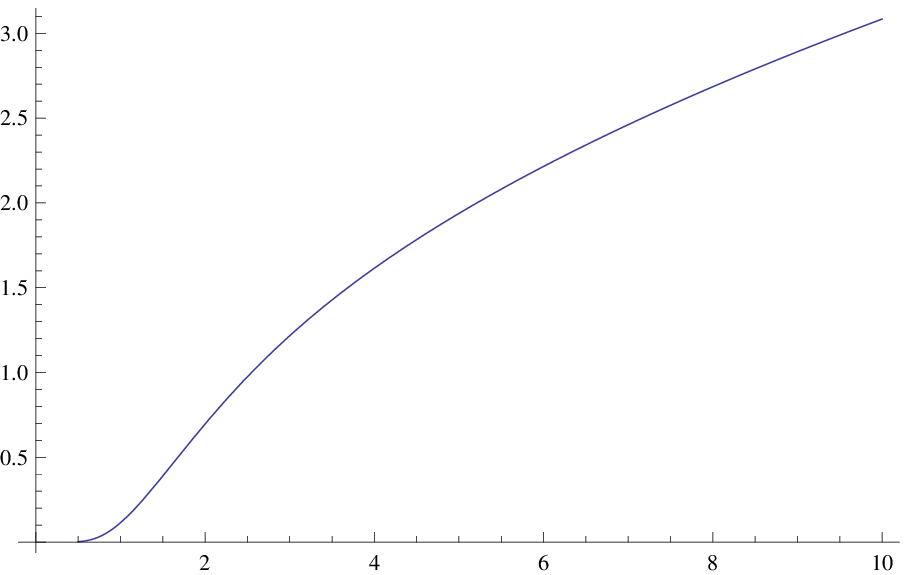}
\caption{Left: $L_1$-norm of $F_B'(z)-1$ for $\gamma \in [0.5, 10]$. Right: $L_2$-norm  of $F_B'(z)-1$ for $\gamma \in [0.5, 10]$.}\label{fig:l1norm}
\end{center} \end{figure}



The combination of the Kolmogorov-Smirnov tests and the $L_1$ and $L_2$ norms show us that the  Weibull distribution almost exhibits Benford behavior when $\gamma$ is modest; as $\gamma$ increases the Weibull no longer conforms to the expected leading digit probabilities. The scale parameter $\alpha$ does have a small effect on the conformance as well, but not nearly to the same extreme as the shape parameter, $\gamma$. Fortunately in many applications the scale parameter $\gamma$ is not too large (it is frequently less than 2 in the Weibull references cited earlier), and thus our work provides additional support for the prevalence of Benford behavior.



\section{Proof of Main Result}\label{sec:approachviaderivs}

To prove Theorem \ref{thm:main}, we study the distribution of $\log_B Z_{\alpha,\gamma} \bmod 1$ when $Z_{\alpha,\gamma}$ has the Weibull distribution with parameters $\alpha$ and $\gamma$. The analysis is aided by the fact that the cumulative distribution function for a Weibull random variable has a nice closed form expression; for $Z_{\alpha,\gamma}$ the cumulative distribution function is $\mathcal{F}_{\alpha,\gamma}(x) = 1 - \exp(-(x/a)^\gamma)$. Let $[a,b] \subset [0,1]$. Then
\bea  \label{eq:keysums}
\mathrm{Prob}(\log_B Z_{\alpha,\gamma} \bmod 1 \in [a,b]) & \ = \ & \sum_{k=-\infty}^\infty \mathrm{Prob}(\log_B Z_{\alpha,\gamma} \bmod 1 \in [a+k,b+k])
\nonumber\\ & = & \sum_{k=-\infty}^\infty {\rm Prob}(Z_{\alpha,\gamma} \in [B^{a+k}, B^{b+k}])
\nonumber\\ &  = & \sum_{k=-\infty}^\infty \left( \exp\left( - \left(\frac{B^{a+k}}{\alpha} \right)^\gamma \right) -  \exp\left( - \left(\frac{B^{b+k}}{\alpha} \right)^\gamma \right)\right).\nonumber\\
\eea

\begin{proof}[Proof of Theorem \ref{thm:main}] It suffices to investigate \eqref{eq:keysums} in the special case when $a=0$ and $b=z$, since for any other interval $[a,b]$ we may determine its probability by subtracting the probability of $[0,a]$ from $[0,b]$. Thus, we study the cumulative distribution function of $\log_B Z_{\alpha,\gamma} \bmod 1$ for $z \in  [0,1]$, which we denote by $F_B(z)$:
\bea \label{probfunc1} F_B(z) & \ := \ & \mathrm{Prob}\left(\log_B Z_{\alpha,\gamma} \bmod 1 \in [0,z]\right)\nonumber\\ & \ = \ & \sum_{k=-\infty}^\infty \left( \exp\left( - \left(\frac{B^{k}}{\alpha} \right)^\gamma \right) -  \exp\left( - \left(\frac{B^{z+k}}{\alpha} \right)^\gamma \right)\right).
\eea
This series expansion is rapidly converging, and the closeness of $Z_{\alpha,\gamma}$ to Benford is equivalent to the rapidly converging series in \eqref{probfunc1} for $F_B(z)$ being close to $z$ for all $z$.

A natural way to investigate the closeness of $F_B(z)$ to $z$ is to compare $F'(z)$ to 1. As in \cite{MiNi2}, studying the derivative $F'_B(z)$ is an easier way to approach this problem, because we obtain a simpler Fourier transform than the Fourier transform of $e^{ - \left(\frac{B^{k}}{\alpha} \right)^\gamma} -  e^{- \left(\frac{B^{z+k}}{\alpha} \right)^\gamma} $. We then can analyze the obtained Fourier transform by applying Poisson summation (Theorem \ref{thm:poissonsum}).

We use the fact that the derivative of the infinite sum $F_B(z)$ is the sum of the derivatives of the individual summands. This is justified by the rapid decay of summands, yielding
\bea \label{derivative1} F_B'(z) & \ = \ &  \sum_{k=-\infty}^\infty \frac{1}{\alpha} \cdot \left[ \exp\left( - \left( \frac{B^{z+k}}{\alpha} \right)^\gamma \right)B^{z+k}\left(\frac{B^{z+k}}{\alpha}\right)^{\gamma -1}\gamma\log B \right]
\nonumber\\ & = & \sum_{k=-\infty}^\infty \frac{1}{\alpha} \cdot \left[ \exp\left( - \left( \frac{\zeta B^{k}}{\alpha} \right)^\gamma \right)\zeta B^{k}\left(\frac{\zeta B^{k}}{\alpha}\right)^{\gamma -1}\gamma\log B \right],
\eea where for $z \in [0,1]$, we use the change of variables $\zeta = B^z$.

We introduce $H(t) = \frac{1}{\alpha} \cdot \exp\left( - \left( \frac{\zeta B^{t}}{\alpha} \right)^\gamma \right)\zeta B^{t}\left(\frac{\zeta B^{t}}{\alpha}\right)^{\gamma -1}\gamma\log B$, where $\zeta \geq 1$ as $\zeta = B^z$ with $z \ge 0$. Since $H(t)$ is decaying rapidly we may apply Poisson summation, thus
\bea \label{poisson} \sum_{k=-\infty}^\infty H(k) \ = \ \sum_{k=-\infty}^\infty \widehat{H}(k), \eea
\noindent where $ \widehat{H}$ is the Fourier Transform of $H: \widehat{H}(u)= \int_{-\infty}^\infty H(t)e^{-2 \pi itu} dt$. Therefore
\bea \label{fouriertrans} F'_B(z) & = & \sum_{k=-\infty}^\infty H(k) \ = \  \sum_{k=-\infty}^\infty \widehat{H}(k)
\nonumber\\ & = & \sum_{k=-\infty}^\infty \int_{-\infty}^\infty \frac{1}{\alpha} \cdot \exp\left( - \left( \frac{\zeta B^{t}}{\alpha} \right)^\gamma \right)\zeta B^{t}\left(\frac{\zeta B^{t}}{\alpha}\right)^{\gamma -1}\gamma\log B \cdot e^{-2 \pi itk} dt.
\eea

We change variables again, setting $w= \left(\zeta B^t/\alpha\right)^\gamma$, which implies \be t\ =\ \log_B \left( \frac{\alpha w^{1/\gamma}}{\zeta} \right) \ \ {\rm and}\ \  dw\ =\ \frac{1}{\alpha} \left( \frac{\zeta B^t}{\alpha} \right)^{\gamma -1} \cdot \zeta B^t \gamma \log B\ dt, \ee so that
\bea \label{fourierchange} F'_B(z) & = & \sum_{k=-\infty}^\infty \int_{0}^\infty e^{-w} \cdot \exp\left( -2\pi ik\cdot \log_B \left( \frac{\alpha w^{1/\gamma}}{\zeta} \right) \right) dw
\nonumber\\ & = & \sum_{k=-\infty}^\infty \left( \frac{\alpha}{\zeta} \right)^{-2\pi ik/ \log B} \int_{0}^\infty e^{-w} \cdot w^{-2\pi ik/\gamma \log B} dw
\nonumber\\ & = & \sum_{k=-\infty}^\infty \left( \frac{\alpha}{\zeta} \right)^{-2\pi ik/ \log B} \Gamma\left(1- \frac{2\pi ik}{\gamma \log B}\right),
\eea
where we used the definition of the $\Gamma$-function in the last line. As $\Gamma(1)=1$, we have
\bea \label{gamma1} F'_B(z) \ = \ 1 + \sum_{m=1}^\infty \left[ \left( \frac{\zeta}{\alpha} \right)^{ \frac{2\pi im}{\log B} } \Gamma \left( 1 - \frac{2\pi im}{\gamma \log B} \right) + \left( \frac{\zeta}{\alpha} \right)^{ \frac{-2\pi im}{\log B}  } \Gamma \left( 1 + \frac{2\pi im}{\gamma \log B} \right) \right]
\eea
As in \cite{MiNi2}, the above series expansion is rapidly convergent. As $\zeta=B^z$ we have
\bea \left( \frac{\zeta}{\alpha} \right)^{2\pi im/ \log B} = \cos \left(2\pi mz -2\pi m \left( \frac{\log \alpha}{\log B} \right) \right) + i \sin \left(2\pi mz -2\pi m \left( \frac{\log \alpha}{\log B} \right) \right), \eea
which gives a Fourier series expansion for $F_B'(z)$ with coefficients arising from special values of the $\Gamma$-function.

Using properties of the $\Gamma$-function we are able to improve (\ref{gamma1}). If $y\in \R$ then $\Gamma(1-iy) =
\overline{\Gamma(1+iy)}$ (where the bar denotes complex
conjugation). Thus the $m$\textsuperscript{th} summand in
(\ref{gamma1}) is the sum of a number and its complex
conjugate, which is simply twice the real part. We use the following standard relationship (see for example \cite{AS}):
\bea \label{gamma1+ix} |\Gamma(1+ix)|^2 \ = \ \frac{\pi
x}{\sinh(\pi x)} \ = \ \frac{2 \pi x}{e^{\pi x} - e^{-\pi x}}. \eea

Writing the summands in (\ref{gamma1}) as $2\Re\left(e^{-2\pi im \left(z -\frac{\log \alpha}{\log B} \right)} \cdot \Gamma \left(1+ \frac{2\pi i m}{\gamma \log B} \right) \right)$, (\ref{gamma1}) becomes
\bea \label{Fprimebgammanext}
F_B'(z)  & = & 1 + 2\sum_{m =
1}^{\infty} \Re\left(e^{-2\pi im \left(z -\frac{\log \alpha}{\log B} \right)} \cdot \Gamma \left(1+ \frac{2\pi i m}{\gamma \log B} \right) \right).   \eea
Finally, in the exponential argument above there is no change in replacing $\alpha$ with $\alpha B$, as this changes the argument by $2\pi i$. Thus it suffices to consider $\alpha \in [a, aB)$ for any $a>0$.
\end{proof}


This proof demonstrates the power of using Poisson summation in Benford's law problems, as it allows us to convert a slowly convergent series expansion into a rapidly converging one, with the main term corresponding to Benford behavior and the other terms measuring the deviation.

\appendix

\section{Proofs of Bounding Estimates}\label{sec:boundproofs}

We first estimate the contribution to $F_B'(z)$ from the tail, say from the terms with $m \ge M$. We do not attempt to derive the sharpest bounds possible, but rather highlight the method in a general enough case to provide useful estimates.

\begin{proof}[Proof of Theorem \ref{thm:error}\eqref{item:maintwo}] We must bound the truncation error
\bea \mathcal{E}_B(z) \ := \
\Re \sum_{m=M}^\infty e^{-2 \pi i m \left( z -\frac{\log \alpha}{\log B} \right)} \cdot \Gamma \left( 1 + \frac{2 \pi i m}{\gamma \log B} \right),
\eea where $ \Gamma(1+i u)= \int_0^\infty e^{-x} x^{i u} dx = \int_0^\infty e^{-x} e^{i u \log x} dx$. Note that in our case, $u = \frac{2 \pi m}{\gamma \log B}$. As $u$ increases there is more oscillation and therefore more cancelation, resulting in a smaller value for our integral. Since $|e^{i \theta}| = 1$, if we take absolute values inside the sum we have $|e^{-2 \pi i m \left(z -\frac{\log \alpha}{\log B}\right)}| = 1$, and thus we may ignore this term in computing an upper bound.

Using standard properties of the Gamma function, we have
\bea \label{eq:error1} |\Gamma(1+i x)|^2 &\ =\ & \frac{\pi x}{\sinh (\pi x)}\ =\ \frac{2 \pi x}{e^{\pi x} - e^{-\pi x}}, \ \ \  \mathrm{where} \   x = \frac{2 \pi m}{\gamma \log B}. \eea This yields
\bea |\mathcal{E}_B(z)| &\ \leq\ & \sum_{m=M}^\infty 1 \cdot \left( \frac{4 \pi ^2 m}{\gamma \log B} \cdot \frac{1}{e^{2 \pi ^2 m / \gamma \log B} -e^{-2 \pi ^2 m / \gamma \log B} } \right)^{1/2}.
\eea

Let $u = e^{2 \pi ^2 m / \gamma \log B}$. We overestimate our error term by removing the difference of the exponentials in the denominator. Simple algebra shows that for $\frac{1}{u - \frac{1}{u} } \le \frac{2}{u}$ we need $u \ge \sqrt{2}$. For us this means
$e^{2 \pi ^2 m / \gamma \log B} \ge \sqrt{2}$, allowing us to simplify the denominator if $m \ge \frac{ \gamma \log B \log 2 }{ 4 \pi ^2 }$, which we may do as we assumed $M$ exceeds this value and $m \ge M$. We substitute this bound into \eqref{eq:error1}, and replace $\sqrt{m}$ with $m$ to simplify the resulting integral:
\bea
|\mathcal{E}_B(z)| \ \leq \ \sum_{m=M}^\infty \left( \frac{4 \pi ^2 m}{\gamma \log B} \right)^{1/2} \cdot \frac{\sqrt{2}}{e^{\pi ^2 m / \gamma \log B}} \ \leq \ \frac{2 \sqrt{2} \pi}{\sqrt{\gamma \log B}} \int_M^\infty m e^{- \pi ^2 m / \gamma \log B} dm.
\eea Letting $a = \pi^2/\gamma\log B$, integrating by parts gives
\bea \label{eq:error2}
|\mathcal{E}_B(z)|& \  \leq \ & \frac{2 \sqrt{2} \pi}{\sqrt{\gamma \log B}} \frac1{a^2}\left(aM e^{-aM} + e^{-aM}\right) \ \le \ \frac{2 \sqrt{2} \pi}{\sqrt{\gamma \log B}} \frac{a+1}{a^2} M e^{-aM} \eea (since $M\ge 1$, $aM+1 \le (a+1)M$), which after some algebra simplifies to \bea |\mathcal{E}_B(z)|& \  \leq \ & \frac{2\sqrt{2} (\pi^2+\gamma\log B) \sqrt{\gamma \log B}}{\pi^3}\ M\ e^{- \pi ^2 M/ \gamma \log B}, \eea
which is the error listed in Theorem \ref{thm:error}(1). \end{proof}

\ \\

\begin{proof}[Proof of Theorem \ref{thm:error}\eqref{item:mainthree}] Given the estimation of the error term from above, we now ask the related question of, given an $\epsilon > 0$, how large must $M$ be so that the first $M$ terms give $F_B'(z)$ accurately to within $\epsilon$ of the true value. Let $C = \frac{2\sqrt{2}(\pi^2+\gamma\log B)\sqrt{\gamma \log B}}{\pi^3}$ and $a = \frac{\pi ^2}{\gamma \log B}$. We must choose $M$ so that $C M e^{-aM} \leq \epsilon$, or equivalently
\bea \frac{C}{a}\ a  M e^{-aM}\ \leq\ \epsilon. \eea
As this is a transcendental equation in $M$, we do not expect a nice closed form solution, but we can obtain a closed form expression for a bound on $M$; for any specific choices of $C$  and $a$ we can easily numerically approximate $M$. We let $u = aM$, giving
\bea
u e^{-u}\ \leq\ a\epsilon/C. \eea
With a further change of variables, we let $k = - \ln (a\epsilon/C)$ and then expand $u$ as $u=k+x$ (as the solution should be close to $k$). We find
\bea
u \cdot e^{-u} \ \leq\  e^{-k}  \ \ \ {\rm is\ equivalent\ to} \ \ \ \frac{k + x}{e^x} \ \leq\  1. \eea
We try $ x = \ln k + \frac{1}{2}$, and see
\bea
\frac{k + x}{e^x} \ \leq\   1  \ \ \ {\rm is\ equivalent\ to} \ \ \
\frac{k + \ln k + \frac{1}{2}}{ k \cdot e^{1/2}} \ \leq \ 1. \eea
From here, we want to determine the value of $k$ such that $\ln k \leq \frac{1}{2} k$, as this ensures the needed inequality above holds. Exponentiating, we need $k^2 \le e^k$. As $e^k \ge k^3/3!$ for $k$ positive, it suffices to choose $k$ so that $k^2 \le k^3/6$, or $k \ge 6$; this holds for $\epsilon$ sufficiently small.
For $k \geq 6$, we have  \be k + \ln k + \frac{1}{2}\ \leq\ k + \frac{1}{2} k + \frac{1}{12} k\ =\ \frac{19}{12} k\ \approx\ 1.5833k, \ee but  \be k \cdot e^{1/2}\ \approx\ 1.64872 k.\ee Therefore a correct cutoff value for $M$, in order to have an error of at most $\epsilon$, is
\bea M &\ =\ & \ \frac{k + \ln k + \frac{1}{2}}{a}, \eea
where \bea k \ = \ \max\left(k,- \ln \left(\frac{a \epsilon }{C} \right)\right),\ \ \ a \ = \ \frac{\pi ^2}{\gamma \log B},\ \ \  C \ = \ \frac{2\sqrt{2}(\pi^2+\gamma\log B)\sqrt{\gamma \log B}}{\pi^3}.  \eea \end{proof}

\ \\

\end{document}